\documentclass[12pt]{amsart}
\usepackage{amsmath,amssymb,amsfonts,amsthm,amsopn,combelow}
\usepackage{graphics}


\setlength{\textwidth}{13,5cm} \setlength{\textheight}{20cm}

\setlength{\oddsidemargin}{0pt}
\setlength{\evensidemargin}{0pt}
\setlength{\textwidth}{148 mm}   


\newcommand{\tfa}{time-frequency analysis}

\newcommand{\ft}{Fourier transform}

\newcommand{\tf}{time-frequency}

\newcommand{\modsp}{modulation space}
\newcommand{\psdo}{pseudodifferential operator}

\newtheorem{tm}{Theorem}[section]
\newtheorem{lemma}[tm]{Lemma}

\newtheorem{theorem}{Theorem}[section]

\newtheorem{proposition}[theorem]{Proposition}
\newtheorem{remark}[theorem]{Remark}

\newcommand{\beqa}{\begin{eqnarray*}}
\newcommand{\eeqa}{\end{eqnarray*}}

\newcommand{\field}[1]{\mathbb{#1}}
\newcommand{\bR}{\field{R}}        
        %
        %



\def\la{\lambda}

 \def\cF{\mathcal{F}}              
 \def\cS{\mathcal{S}}

\def\rd{\bR^d}

\def\rdd{{\bR^{2d}}}

\def\intrdd{\int_{\rdd}}

\def\R{\right)}

\def\<{\left<}
\def\>{\right>}

\def\mv1{M_v^1}

\def\Mmpq{M_m^{p,q}}
\def\phas{(x,\xi )}


\hyphenation{Cara-theo-do-ry}
\hyphenation{Dau-be-chies}
\hyphenation{Barg-mann}
\hyphenation{dis-tri-bu-ti-ons}
\hyphenation{pseu-do-dif-fe-ren-tial}
\hyphenation{ortho-normal}



\def\o{\xi}

\def\z{\zeta}

\def\R{\mathbb{R}}
\def\Ren{\mathbb{R}^d}
\def\Renn{\mathbb{R}^{2d}}

\def\sch{\mathcal{S}}

\def\Fur{\mathcal{F}}

\def\f{\varphi}

\def\Opw{Op_{\mathrm{W}}}
\def\Sn2{S_{2}(L^{2}(\Ren))}
\def\S1{S_{1}(L^{2}(\Ren))}
\def\sig00{\sigma_{0,0}}

\def\la{\langle}
\def\ra{\rangle}




\begin{document}


\title[Boundedness of Pseudodifferential Operators]{Boundedness of Pseudodifferential Operators with symbols in Wiener amalgam spaces on Modulation Spaces}
\author{Lorenza D'Elia}
\address{Dipartimento di Matematica,
Universit\`a di Torino, via Carlo Alberto 10, 10123 Torino, Italy}
\author{S. Ivan Trapasso}
\address{Dipartimento di Matematica,
Universit\`a di Torino, via Carlo Alberto 10, 10123 Torino, Italy}
\email{lorenza.delia@edu.unito.it}
\email{salvatore.trapasso@edu.unito.it}

\subjclass[2000]{42B35,35B65, 35J10, 35B40} \keywords{Wigner distribution, Wiener amalgam spaces, modulation spaces}
\date{}

\begin{abstract}
This paper provides sufficient conditions for the boundedness of Weyl operators on modulation spaces. The Weyl symbols belong to Wiener amalgam spaces, or generalized modulation spaces, as recently renamed by their inventor Hans Feichtinger. This is the first result which relates symbols in Wiener amalgam spaces to operators acting on classical modulation spaces.
\end{abstract}

\maketitle

\section{Introduction}
In this paper we investigate the boundedness properties of pseudodifferential operators in the Weyl form. These operators  arise as quantization rule proposed by Weyl in \cite{Weyl1927}. Namely, the rule  assigns an operator $\Opw(a)$ to a function $a$ (the so-called Weyl symbol) on the phase space $\rdd$:
$$ a\to \Opw(a).$$
The operator $ \Opw(a)$ is called a Weyl operator or Weyl transform (cf., e.g., \cite{Wongbook}).
From a Time-frequency Analysis perspective Weyl operators can be introduced by means of the related time-frequency representation, the so-called (cross-)Wigner distribution $W(f,g)$, which for signals 
$f,g$ in the
Schwartz class $\mathcal{S}(\mathbb{R}^{d})$ is defined by 
\begin{equation}
W(f,g)(x,\omega )=\int_{\mathbb{R}^{d}}e^{-2\pi iy\omega }f(x+\frac{y}{2})\overline{g(x-\frac{y}{2})}\,dy.  \label{wigner}
\end{equation}
The Weyl operator ${Op}_{\mathrm{W}}(a)$ with symbol $a$ in the space of tempered distribution $\mathcal{S}^{\prime }({\mathbb{R}^{2d}})$ can be then defined by the formula 
\begin{equation}\label{weakdef}
\langle {Op}_{\mathrm{W}}(a)f,g\rangle =\langle
a,W(g,f)\rangle,\quad f,g\in \cS(\rd).
\end{equation}

The study of continuity properties for Weyl operators on different kinds of function spaces has been pursued by many authors.  Depending on the  properties of the symbol $a$, one can infer the corresponding continuity properties of the related operator ${Op}_{\mathrm{W}}(a)$. 

For the continuity properties of ${Op}_{\mathrm{W}}(a)$ on $L^p(\rd)$ spaces we refer the reader to \cite{cordero2,Wongbook}.\par 
Here we focus on Banach spaces which measure
the time-frequency decay of a function/distribution in the phase space.  They are called modulation and Wiener amalgam spaces. Indeed, we shall study the continuity properties of the operator ${Op}_{\mathrm{W}}(a)$ on the modulation spaces $M^{r_1,r_2}(\rd)$ $1\leq r_1,r_2\leq\infty$ (cf. the following section for their definition), introduced by Hans Feichtinger in \cite{feichtinger83}. The corresponding Weyl symbol $a$  belongs to the Wiener amalgam spaces $W(\cF L^p, L^q)$, $1\leq p,q\leq \infty$ (cf. Section 2). 
The latter spaces are often known in the literature as Wiener amalgam spaces with local component $\cF L^p$ and global component $L^q$, for $1\leq p,q\leq \infty$,  but nowadays their inventor Hans Feichtinger \cite{feichtinger90} is  suggesting to call them simply modulation spaces,
since they arise as the Fourier transform of the classical modulation spaces $M^{p,q}$ introduced in \cite{feichtinger83} and can similarly be defined  by means of the short-time Fourier transform (see Section 2 for details).

Continuity properties of Weyl operators with symbols in classical modulation spaces $M^{p,q}$ have been investigated by many authors, starting from the earliest paper \cite{g-heil99}. The most important contributions in this framework are contained in \cite{beltita,benyi,CG02,cgn01,cgn3,cordero1,cordero2,corderonicolasharp,cordero3,corderoToftWah,charly06,labate2,sugitomita2,toft1,Toftweight}.\par 
Let us also recall the many studies on the continuity properties of Fourier integral operators (FIOs) on modulation spaces \cite{fiowiener,cn,fio5,fio9,fio1,fio3,fio-evolution,fio6,fio7,fio8,fio09,fio10} which  find applications principally in  the study of Schr\"odinger equations. Pseudo-differential operators are a special case of FIOs, having phase function $\Phi\phas=2\pi i x\xi$.  

This study is limited to pseudodifferential operators, however a future object of our research would be to investigate the continuity properties for FIOs.

The main result of this paper can be formulated in the un-weighted case as follows (cf. the subsequent Theorem \ref{contwiener}).

\begin{theorem}\label{contwiener0} Assume that $1 \leq p,q,r_1,r_2 \leq \infty$ satisfy
\begin{equation*}
q\leq p'
\end{equation*}
and
\begin{equation*}
\max \{r_1,r_2,r_1',r_2'\}\leq p.\end{equation*}
Then every Weyl operator $\operatorname*{Op}\nolimits_{\mathrm{W}}(a)$ having symbol $a\in W(\cF L^p, L^q)$, from $\cS(\rd)$ to $\cS'(\rd)$, extends uniquely
to a bounded operator on $\mathcal{M}^{r_1,r_2}(\R^d)$, with the estimate
\begin{equation*}
\|\operatorname*{Op}\nolimits_{\mathrm{W}}(a)f\|_{\mathcal{M}^{r_1,r_2}} \lesssim
\|a\|_{W(\cF L^p,L^q)}\|f\|_{\mathcal{M}^{r_1,r_2}}.
\end{equation*}
\end{theorem}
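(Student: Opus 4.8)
The plan is to pass, by duality, from the operator norm on $\mathcal{M}^{r_1,r_2}(\rd)$ to a bilinear estimate for the cross-Wigner distribution, and then to prove that estimate by an explicit time-frequency computation. First I would establish the a priori bound
\[
|\langle \Opw(a)f,g\rangle| \lesssim \|a\|_{W(\cF L^p,L^q)}\,\|f\|_{\mathcal{M}^{r_1,r_2}}\,\|g\|_{\mathcal{M}^{r_1',r_2'}},\qquad f,g\in\cS(\rd).
\]
By the weak definition \eqref{weakdef} the left-hand side equals $|\langle a,W(g,f)\rangle|$, and since $W(g,f)\in\cS(\rdd)$ for $f,g\in\cS(\rd)$, while the pairing between $W(\cF L^p,L^q)$ and $W(\cF L^{p'},L^{q'})$ is dominated by the product of the two norms — a direct consequence of Hölder's inequality applied to the short-time Fourier transforms, in the frequency variable with the pair $p,p'$ and then in the space variable with the pair $q,q'$ — it suffices to prove the bilinear estimate
\[
\|W(g,f)\|_{W(\cF L^{p'},L^{q'})} \lesssim \|f\|_{\mathcal{M}^{r_1,r_2}}\,\|g\|_{\mathcal{M}^{r_1',r_2'}}
\]
under the hypotheses $q\le p'$ and $\max\{r_1,r_2,r_1',r_2'\}\le p$. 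Granting this, the extension of $\Opw(a)$ to a bounded operator on all of $\mathcal{M}^{r_1,r_2}(\rd)$, together with the uniqueness of the extension, follows by standard arguments: for $r_1,r_2<\infty$ one uses the density of $\cS(\rd)$ in $\mathcal{M}^{r_1,r_2}$, and the cases with $r_1$ or $r_2$ equal to $\infty$ are handled by a further duality, using that the adjoint of $\Opw(a)$ with respect to the sesquilinear pairing is $\Opw(\bar a)$ and that $\|\bar a\|_{W(\cF L^p,L^q)}=\|a\|_{W(\cF L^p,L^q)}$.

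The core of the argument is the bilinear Wigner estimate, which I would prove by computing the short-time Fourier transform of $W(g,f)$ with respect to the window $\Phi=W(\varphi_0,\varphi_0)$, where $\varphi_0$ is a normalized Gaussian on $\rd$. Unwinding the definitions of the Wigner distribution \eqref{wigner} and of the STFT yields an explicit identity: after a suitable choice of coordinates on $\bR^{4d}$, the function $(z,\zeta)\mapsto|V_\Phi W(g,f)(z,\zeta)|$ equals the tensor product $|V_{\varphi_0}g|\otimes|V_{\varphi_0}f|$ composed with an invertible linear map of $\bR^{4d}$. Inserting this into the definition of the $W(\cF L^{p'},L^{q'})$-norm of $W(g,f)$ (an $L^{q'}$-norm in $z$ of an $L^{p'}$-norm in $\zeta$), carrying out the linear change of variables, and estimating the resulting integral by a combination of Hölder's inequality, the nesting of mixed Lebesgue norms, and Young's convolution inequality, one bounds it by $\|f\|_{\mathcal{M}^{r_1,r_2}}\|g\|_{\mathcal{M}^{r_1',r_2'}}$. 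The condition $\max\{r_1,r_2,r_1',r_2'\}\le p$, i.e.\ $p'\le\min\{r_1,r_2,r_1',r_2'\}$, is precisely what permits the Hölder and mixed-norm steps in the ``local'' (frequency) variable, whereas $q\le p'$ is exactly what makes the estimate close in the ``global'' (space) variable, where a convolution enters because the change of variables couples the phase-space variables of $f$ and $g$; a direct check in the Hilbert-Schmidt case $r_1=r_2=2$ shows that $q\le p'$ is sharp here. Alternatively, this bilinear bound can be extracted from known continuity properties of the Wigner map between products of modulation spaces, together with the fact that the Fourier transform maps $M^{p',q'}(\rdd)$ onto $W(\cF L^{p'},L^{q'})(\rdd)$.

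The main obstacle is exactly this bilinear estimate: identifying the $\bR^{4d}$ change of variables correctly and then organizing the iterated mixed norms so that the Hölder and Young exponents fit together in each $2d$-dimensional block and the two index conditions emerge with no slack. Everything else — the duality reduction, the density argument, and the uniqueness of the bounded extension — is routine once the bilinear estimate is in hand.
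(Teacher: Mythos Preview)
Your approach is correct but takes a genuinely different route from the paper's. The paper does not attempt to prove the general bilinear estimate
\[
\|W(g,f)\|_{W(\cF L^{p'},L^{q'})}\lesssim \|f\|_{M^{r_1,r_2}}\|g\|_{M^{r_1',r_2'}}
\]
directly. Instead it establishes only two endpoint cases of it: $p'=1,\,q'=\infty$ (valid for \emph{all} $r_1,r_2$) and $p'=q'=2$ (for $r_1=r_2=2$), via the factorization $|V_\Phi W(g,f)(z,\zeta)|=|V_\varphi f(z+J\zeta/2)|\,|V_\varphi g(z-J\zeta/2)|$ together with H\"older in the first case and $L^1*L^1\subset L^1$ in the second. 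These yield boundedness of $\Opw(a)$ on $M^{r_1,r_2}$ for $a\in W(\cF L^\infty,L^1)$ and on $M^2$ for $a\in W(\cF L^2,L^2)$, and the full range is then obtained by \emph{complex interpolation at the operator level} (the symbol space and the domain/range modulation spaces simultaneously), followed by the Wiener amalgam inclusion $W(\cF L^p,L^q)\hookrightarrow W(\cF L^p,L^{p'})$ for $q\le p'$.

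Your plan instead settles the bilinear Wigner bound for all admissible $(p,q,r_1,r_2)$ in one stroke: after the same change of variables one lands on $\bigl\||V_\varphi f|^{p'}*|V_\varphi g|^{p'}\bigr\|_{L^{q'/p'}(\rdd)}^{1/p'}$, and the mixed-norm Young inequality with exponents $(r_1/p',r_2/p')$ and $(r_1'/p',r_2'/p')$ closes exactly when $p'\le\min\{r_1,r_2,r_1',r_2'\}$ (equivalently $\max\{r_1,r_2,r_1',r_2'\}\le p$) and forces $q'=p$, after which the inclusion step handles $q\le p'$. This is more hands-on and avoids interpolation theory altogether, at the cost of having to invoke (or verify) Young's inequality in mixed Lebesgue norms. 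The paper's route is shorter on computation but relies on complex interpolation between Wiener amalgam and modulation scales; yours is more self-contained and makes the role of the index constraints visible at the level of a single convolution estimate. Either way, the duality/density/adjoint argument you sketch for the extension step is routine and matches what the paper implicitly uses.
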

To our knowledge, this is the first result in the literature which links symbols in Wiener amalgam spaces to operators acting on modulation spaces.

Boundedness results for Weyl operators with symbols in modulation spaces still hold for the other forms of pseudodifferential operators, the so-called $\tau$-operators. These operators can be either defined as a quatization rule or by means of the related time-frequency representation (cf. \cite{bogetal}). Here we simply recall the latter.
For $\tau \in \lbrack 0,1]$, the
(cross-)$\tau $-Wigner distributions is given by
\begin{equation}
W_{\tau }(f,g)(x,\omega )=\int_{\mathbb{R}^{d}}e^{-2\pi iy\zeta }f(x+\tau y)%
\overline{g(x-(1-\tau )y)}\,dy\quad f,g\in \mathcal{S}(\mathbb{R}^{d}),
\label{tauwig}
\end{equation}
whereas the $\tau$-pseudodifferential
 operators is 
 \begin{equation}
 \langle {Op}_{\mathrm{\tau }}(a)f,g\rangle =
 \langle
 a,W_{\tau }(g,f)\rangle \quad f,g\in \mathcal{S}(\mathbb{R}^{d}).
 \label{tauweak}
 \end{equation}
For $\tau=1/2$  we recapture the Weyl operator, if  $\tau=0$ the operator is called the Kohn-Nirenberg operator ${Op}_{\mathrm{KN}}$.
A Kohn-Nirenberg operator ${Op}_{\mathrm{KN}}$ and a Weyl operator ${Op}_{\mathrm{W}}$ are related by the formula
 	$$ {Op}_{\mathrm{KN}}(a)={Op}_{\mathrm{W}}(\mathcal{U}^{-1}a)
 	$$
 	where
 	\begin{equation}
 	\label{U}
 	\mathcal{U}^{-1}=\cF^{-1} \mathcal{N_C}\cF ,
 	\end{equation}
 	 $\cF$ is the Fourier transform, $\mathcal{N_C} f(z)= e^{-\pi i z \cdot C z}f(z)$, $z\in\rdd$, and $$C=\begin{pmatrix}
 	0 & 1/2 I \\
 	1/2 I & 0\end{pmatrix}.$$
 	An easy computation (cf. \cite[Corollary 14.5.5]{book}) shows that
 	$$| V_\Phi (\mathcal{U}^{-1}a)(z,\zeta)|=| V_{\mathcal{U}\Phi}a(z-C\zeta,\zeta)|
 	$$
 	from which we conclude that  $M^{p,q}$ is invariant under the action of $\mathcal{U}^{-1}$ and therefore, results for Kohn-Nirenberg \psdo s  with symbols in $M^{p,q}$ still hold for Weyl operators and viceversa.\par
 More generally,  for $\tau$-\psdo \,
 	it was proved in \cite{Hormander3} and in \cite[Remark 1.5]{toft1} that for every choice $\tau_1,\tau_2\in [0,1]$, $a_1,a_2\in \cS'(\rdd)$,
 	\begin{equation}\label{linktausymb1}
 	{Op}_{\mathrm{\tau_1}}(a_1)= {Op}_{\mathrm{\tau_2}}(a_2)\, \Leftrightarrow\,\widehat{a_2}(\xi_1,\xi_2)=e^{-2\pi i(\tau_2-\tau_1)\xi_1\xi_2}\widehat{a_1}(\xi_1,\xi_2).
 	\end{equation}
 	For $t>0$ consider $H_t(x,\xi)=e^{2\pi i t x \xi}$ and observe that
 	\begin{equation}\label{ftchirp}\cF H_t (\zeta_1,\zeta_2)=\frac 1{t^{d}} e^{-2\pi i\frac1t \zeta_1\zeta_2 }.\end{equation}
 	So, for $\tau_1\not=\tau_2$, by \eqref{ftchirp},
 	\begin{equation}\label{linktausymb2}
 	a_2 \phas =\frac1{|\tau_1-\tau_2|^d} e^{2\pi i(\tau_2-\tau_1)\Phi}\ast a_1 \phas,
 	\end{equation}
 	where $\Phi\phas =x\o$.
 	The mapping $a\mapsto T_\Phi a= e^{2\pi i\Phi}\ast a$ is a homeomorphism on $M^{p,q}(\rdd)$, $1\leq p,q\leq \infty$, \cite[Proposition 1.2 (5)]{toft1}.
 	
 	Coming back to  Wiener amalgam spaces $W(\cF L^p,L^q)$, we first observe that they are not invariant under the action of the operator $ \mathcal{U}=\cF^{-1} \mathcal{N_{-C}}\cF $. This is proved in \cite[Proposition 6.4]{cordero2}. So that
 	boundedness results for Weyl operators do not extend automatically to Kohn-Niremberg ones and vice-versa. This result easily extends to the case of any $\tau$-\psdo. Indeed, for any $\tau>0$, the same arguments as in the proof of Proposition 6.4 of \cite{cordero2} apply to the metaplectic operator $\mathcal{U_\tau}:=\cF^{-1} \mathcal{N_{-\tau C}}\cF$.
 	This is the reason why our main result can be stated   only for Weyl operators.
 	
We shall pursue the study of boundedness properties of  $\tau$-\psdo s in a subsequent paper.

\textbf{Notation.} We define $t^2=t\cdot t$, for $t\in\rd$, and
$xy=x\cdot y$ is the scalar product on $\Ren$. The Schwartz class is denoted by  $\sch(\Ren)$, the space of tempered
distributions by  $\sch'(\Ren)$.   We use the brackets  $\la
f,g\ra$ to denote the extension to $\sch (\Ren)\times\sch '(\Ren)$ of
the inner product $\la f,g\ra=\int f(t){\overline {g(t)}}dt$ on
$L^2(\Ren)$. 
The Fourier transform of a function $f$ on $\rd$ is normalized as
\[
\Fur f(\xi)= \int_{\rd} e^{-2\pi i x\xi} f(x)\, dx.
\]
\section{Preliminaries}

\subsection{Modulation and Wiener amalgam spaces}
Modulation and Wiener amalgam space norms are a  measure
of
the joint time-frequency distribution of $f\in \sch '$. For their
basic properties we refer to  \cite{feichtinger80,feichtinger83,feichtinger90} and the textbooks \cite{Birkbis,book}.

Let $f\in\cS'(\rd)$. We define the short-time Fourier transform of $f$ as
\begin{equation}\label{STFTdef}
V_gf(z)=\Fur [fT_x g](\xi)=\int_{\Ren}
 f(y)\, {\overline {g(y-x)}} \, e^{-2\pi iy \o }\,dy
\end{equation}
for $z=(x,\xi)\in\rd\times\rd$.\par

For  description of  decay  properties, we use
 weight functions  on the \tf\ plane. In the sequel $v$ will always be a
continuous, positive,  even, submultiplicative  weight function (i.e. a
submultiplicative weight), i.e., $v(0)=1$, $v(z) = v(-z)$, and
$ v(z_1+z_2)\leq v(z_1)v(z_2)$, for all $z, z_1,z_2\in\Renn.$
A positive, even weight function $m$ on $\Renn$ is called  {\it
  v-moderate} if
$ m(z_1+z_2)\leq Cv(z_1)m(z_2)$  for all $z_1,z_2\in\Renn.$ Let us denote by $\mathcal{M}_{v}(\Renn)$ the space of $v$-moderate weights.

Given $g\in\sch(\Ren)\setminus\{0\}$, a $v$-moderate weight
function $m$ on $\Renn$, $1\leq p,q\leq
\infty$, the {\it
  modulation space} $M^{p,q}_m(\Ren)$ consists of all tempered
distributions $f\in\sch'(\Ren)$ such that $V_gf\in L^{p,q}_m(\Renn )$
(weighted mixed-norm spaces). The norm on $M^{p,q}_m$ is
$$
\|f\|_{M^{p,q}_m}=\|V_gf\|_{L^{p,q}_m}=\left(\int_{\Ren}
  \left(\int_{\Ren}|V_gf(x,\o)|^pm(x,\o)^p\,
    dx\right)^{q/p}d\o\right)^{1/q}  \,
$$
(obvious modifications for $p=\infty$ or $q=\infty$). If $p=q$, we write $M^p_m$ instead of $M^{p,p}_m$, and if $m(z)\equiv 1$ on $\Renn$, then we write $M^{p,q}$ and $M^p$ for $M^{p,q}_m$ and $M^{p,p}_m$.

The space  $\Mmpq (\Ren )$ is a Banach space
whose definition is independent of the choice of the window $g$, in the sense that different  non-zero window functions yield equivalent  norms.
The modulation space $M^{\infty,1}$ is also called Sj\"ostrand's class \cite{Sjostrand1}.

For any $p,q \in [1,\infty]$ and any $m \in \mathcal{M}_{v}(\rdd)$,
the inner product $\la \cdot,\cdot \ra$ on $\cS (\rd) \times \cS (\rd)$ extends to a continuous sesquilinear map
$M^{p,q}_m (\rd) \times M^{p',q'}_{1/m} (\rd) \rightarrow \mathbb C$.

Here and elsewhere the conjugate exponent $p'$ of $p \in [1,\infty]$ is defined by $1/p+1/p'=1$.
For any \emph{even} weight functions $u,w$ on $\rd$, the Wiener amalgam spaces $W(\Fur L^p_u,L^q_w)(\rd)$ are given by the distributions $f\in\cS'(\rd)$ such that
\[
\|f\|_{W(\Fur L^p_u,L^q_w)(\rd)}:=\left(\int_{\Ren}
  \left(\int_{\Ren}|V_gf(x,\o)|^p u^p(\o)\,
    d\o\right)^{q/p} w^q(x)d x\right)^{1/q}<\infty  \,
\]
(obvious modifications for $p=\infty$ or $q=\infty$).
Using Parseval identity in \eqref{STFTdef}, we can write the so-called fundamental identity of \tfa\, $V_g f(x,\o)= e^{-2\pi i x\o}V_{\hat g} \hat f(\o,-x)$, so that $$|V_g f(x,\o)|=|V_{\hat g} \hat f(\o,-x)| = |\mathcal F (\hat f \, T_\o \overline{\hat g}) (-x)|$$  and (recall $u(x)=u(-x)$)
 $$
\| f \|_{{M}^{p,q}_{u\otimes w}} = \left( \int_{\rd} \| \hat f \ T_{\o} \overline{\hat g} \|_{\cF L^p_u}^q w^q(\o) \ d \o \right)^{1/q}
= \| \hat f \|_{W(\cF L_u^p,L_w^q)}.
$$
Hence Wiener amalgam spaces are simply the image under \ft\, of modulation spaces:
\begin{equation}\label{W-M}
\cF ({M}^{p,q}_{u\otimes w})=W(\cF L_u^p,L_w^q).
\end{equation}
For completeness, let us recall the inclusion properties of modulation spaces.
Suppose $m_1, m_2 \in \mathcal{M}_{v}(\rdd)$. Then
\begin{equation}\label{modspaceincl1}
\begin{aligned}
& \cS (\rd) \subseteq M_{m_1}^{p_1,q_1} (\rd) \subseteq M_{m_2}^{p_2,q_2} (\rd) \subseteq
\sch'(\rd), \\
& p_1 \leq p_2, \quad q_1 \leq q_2, \quad m_2 \lesssim
m_1.
\end{aligned}
\end{equation}

We denote by $J$ the symplectic  matrix
\begin{equation}\label{J}J=%
\begin{pmatrix}
0_{d\times d} & I_{d\times d}\\
-I_{d\times d} & 0_{d\times d}%
\end{pmatrix}.
\end{equation}

\par
\section{Symbols in Wiener amalgam spaces}
We need first to  investigate  the properties of the Wigner distribution in terms of Wiener amalgam spaces.
From now on we set $v_J(z)=v(Jz)$, where $J$ is the symplectic matrix in \eqref{J}. We obtain the following results.

\begin{lemma}\label{Wigwienerlemma} Consider $m \in \mathcal{M}_{v}(\rdd)$, $1\leq p_1,p_2\leq \infty$,  $f\in M^{p_1,p_2}_m$, $g\in M^{p_1',p'_2}_{1/m}$, then
the Wigner distribution $W(g,f)\in W(\cF L^1_{1/v_J}, L^\infty)$, with
\begin{equation}\label{Wigwiener}
\|W(g,f)\|_{W(\cF L^1_{1/v_J}, L^\infty)} \lesssim \|f\|_{M^{p_1,p_2}_m}\|g\|_{M^{p_1',p'_2}_{1/m}}.
\end{equation}
\end{lemma}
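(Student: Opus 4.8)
The plan is to compute the short-time Fourier transform of the Wigner distribution $W(g,f)$ with respect to a suitable window and to estimate its mixed Wiener-amalgam norm by pairing it against the given modulation-space norms of $f$ and $g$. The natural window to choose is $\Phi = W(\vp_2,\vp_1)$ for some fixed nonzero $\vp_1,\vp_2\in\cS(\rd)$, since the STFT of a cross-Wigner distribution against a cross-Wigner window has a well-known closed form: there is a (meta\-plectic) change of variables, essentially governed by the symplectic matrix $J$ in \eqref{J}, expressing $V_{W(\vp_2,\vp_1)}W(g,f)(z,\zeta)$ as a product of two STFTs of $f$ and $g$ evaluated at linear images of $(z,\zeta)$. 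Concretely, up to a unimodular factor and the standard identity (cf. \cite[Lemma 14.5.1]{book}), one has a relation of the schematic form $|V_{\Phi}W(g,f)(z,\zeta)| = |V_{\vp_1}g(\,\cdot\,)|\,|V_{\vp_2}f(\,\cdot\,)|$ with the arguments obtained from $z,\zeta$ by an invertible linear map involving $J$; this is the point where the weight $v_J(z)=v(Jz)$ enters, since the $v$-moderateness of $m$ must be transported through exactly this change of variables.

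First I would record the precise STFT formula for $W(g,f)$ and carry out the substitution, tracking which coordinates of $f$ and $g$ the variables $x$, $\omega$ (the phase-space variable) and $\zeta$ (the frequency variable dual to phase space) land in. Next I would bound the local component: the $\cF L^1_{1/v_J}$-norm in the $\zeta$-variable becomes, after the change of variables, an $L^1$-norm in one frequency slot of the product $|V_{\vp_1}g|\cdot|V_{\vp_2}f|$; applying Hölder in that single integration variable with exponents $p_1,p_1'$ (to split $f$'s contribution against $g$'s) and absorbing the weight via $m(\,\cdot\,)\cdot m(\,\cdot\,)^{-1} \lesssim v(\,\cdot\,)$ reduces this to a pointwise-in-$x$ product of a partial $M^{p_1,\cdot}_m$-type integral of $f$ and the corresponding one of $g$. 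Then I would bound the global component: taking the $L^\infty$-norm over the remaining phase-space variable, apply Hölder once more in the last integration variable with exponents $p_2,p_2'$, which yields exactly $\|f\|_{M^{p_1,p_2}_m}\|g\|_{M^{p_1',p_2'}_{1/m}}$ after recognizing the mixed norms. Finally I would note independence of the result from the choice of $\vp_1,\vp_2$, which is automatic once the estimate holds for one admissible window, by the standard window-change lemma for modulation and Wiener amalgam spaces.

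The main obstacle I anticipate is purely bookkeeping rather than conceptual: keeping the linear change of variables straight so that the weights $m$ and $1/m$ pair up correctly and combine (via $v$-moderateness) into the single factor $1/v_J$ sitting in the local component of the Wiener amalgam target space, while simultaneously making sure the two applications of Hölder are performed in the correct integration variables and in the correct order (local/frequency first, then global/space). A secondary technical point is to justify all manipulations first for $f,g\in\cS(\rd)$, where $W(g,f)$ is a genuine Schwartz function and every integral converges absolutely, and only then extend by density and the duality $M^{p,q}_m\times M^{p',q'}_{1/m}\to\mathbb C$ recalled in Section 2 to the stated ranges of $f$ and $g$.
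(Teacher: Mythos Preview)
Your proposal is correct and follows essentially the same route as the paper: the key identity $|V_{\Phi}W(g,f)(z,\zeta)|=|V_\vp f(z+\tfrac{J\zeta}{2})|\,|V_\vp g(z-\tfrac{J\zeta}{2})|$ from \cite[Lemma~14.5.1]{book}, the substitution $u=J\zeta$, the $v$-moderateness bound $1/v(u)\lesssim m(z+\tfrac u2)/m(z-\tfrac u2)$, and H\"older. The one place your bookkeeping description goes slightly astray is the handling of the global $L^\infty$ component: both H\"older steps (with $p_1,p_1'$ and then $p_2,p_2'$, or equivalently a single mixed-norm H\"older in $L^{p_1,p_2}\times L^{p_1',p_2'}$) are applied in the \emph{local} $2d$-dimensional $u$-integral, after which the resulting bound $\|V_\vp f\cdot m\|_{L^{p_1,p_2}}\|V_\vp g/m\|_{L^{p_1',p_2'}}$ is independent of $z$ by translation invariance, so the outer supremum is trivial rather than requiring a further H\"older.
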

\begin{proof}
  If $\zeta = (\z_1,\z_2)\in \Renn$, then
\cite[Lemma~14.5.1]{book} says that
$$|{ {V}}_\Phi (W(g,f))(z,\zeta)| =| V_\f f(z
+\tfrac{J{\z }}{2})| \,  |V_\f g(z - \tfrac{J{\z }}{2})| \,.$$
Consequently
\begin{equation}\label{star}
  \|W(g,f)\|_{W(\cF L^1_{1/v_J}, L^\infty)} \asymp  \sup_{z\in\rdd} \intrdd | V_\f f(z +\tfrac{J{\z }}{2})| \,  |V_\f g(z -\tfrac{J{\z }}{2})| \frac1 {v(J\z)}  \, d\z.
\end{equation}
  Making the change of variables $u=J\z$ and observing that $$\frac 1{v(u)}\leq C \frac{m(z+\frac u 2)}{m(z-\frac u2)},$$
  \begin{align*}
    \|W(g,f)\|_{W(\cF L^1_{1/v_J}, L^\infty)}&\leq C \sup_{z\in\rdd} \intrdd  | V_\f f(z +\frac u{2})| \,  |V_\f g(z -\frac{u}{2})| \frac {m(z+\frac u2)}  {m(z-\frac u2)}  \, du\\
    &=2^{2d} C \sup_{z\in\rdd} \intrdd | V_\f f(z +u)| \,  |V_\f g(z -u)| \frac{m(z+u)} {m(z-u)}  \, du\\
    &\leq \tilde{C} \|V_{\f} f m\|_{L^{p_1,p_2}} \|V_{\f} g \frac1m\|_{L^{p'_1,p'_2}}\\
    &\lesssim \|f\|_{M^{p_1,p_2}_m}\|g\|_{M^{p_1',p'_2}_{1/m}}.
  \end{align*}
  The claim is proved.
\end{proof}
\begin{lemma}\label{Wigwienerlemma2} Consider $m \in \mathcal{M}_{v}(\rdd)$,  $f\in M^{2}_m$, $g\in M^{2}_{1/m}$, then
the Wigner distribution $W(g,f)\in W(\cF L^2_{1/v_J}, L^2)$, with
\begin{equation}\label{Wigwiener2}
\|W(g,f)\|_{W(\cF L^2_{1/v_J}, L^2)} \lesssim \|f\|_{M^{2}_m}\|g\|_{M^{2}_{1/m}}.
\end{equation}
\end{lemma}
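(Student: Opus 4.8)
The plan is to mimic the proof of Lemma \ref{Wigwienerlemma}, replacing the $L^1$--$L^\infty$ endpoint estimate by the $L^2$--$L^2$ one. As before, the starting point is the identity from \cite[Lemma~14.5.1]{book}, which for $\zeta=(\zeta_1,\zeta_2)\in\Renn$ gives
\[
|V_\Phi(W(g,f))(z,\zeta)| = |V_\f f(z+\tfrac{J\zeta}{2})|\,|V_\f g(z-\tfrac{J\zeta}{2})|.
\]
Hence, writing out the $W(\cF L^2_{1/v_J},L^2)$ norm and using that $v_J(\zeta)=v(J\zeta)$,
\[
\|W(g,f)\|_{W(\cF L^2_{1/v_J},L^2)}^2 \asymp \intrdd\!\!\intrdd |V_\f f(z+\tfrac{J\zeta}{2})|^2\,|V_\f g(z-\tfrac{J\zeta}{2})|^2\,\frac{1}{v(J\zeta)^2}\,d\zeta\,dz.
\]

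Next I would perform the change of variables $u=J\zeta$ (which has unit Jacobian, $J$ being symplectic) and then the substitution $u\mapsto 2u$, exactly as in the previous lemma, and invoke the $v$-moderateness bound $v(u)^{-1}\le C\,m(z+\tfrac u2)/m(z-\tfrac u2)$ to absorb the weight. This turns the double integral, up to a constant, into
\[
\intrdd\!\!\intrdd |V_\f f(z+u)|^2 m(z+u)^2\,|V_\f g(z-u)|^2 \frac{1}{m(z-u)^2}\,du\,dz.
\]
Now the key point is that, after the further linear change of variables $(z,u)\mapsto(z+u,z-u)$ — which again has constant Jacobian — the integral factorizes as
\[
\Big(\intrdd |V_\f f(w)|^2 m(w)^2\,dw\Big)\Big(\intrdd |V_\f g(w')|^2 \frac{1}{m(w')^2}\,dw'\Big) = \|V_\f f\,m\|_{L^2}^2\,\|V_\f g\,\tfrac1m\|_{L^2}^2.
\]
By the definition of modulation space norms this equals $\|f\|_{M^2_m}^2\|g\|_{M^2_{1/m}}^2$ (with the window $\f$; independence of the window gives equivalence of norms), and taking square roots yields \eqref{Wigwiener2}.

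I do not expect a genuine obstacle here: the whole argument is the $p_1=p_2=2$ specialization of the previous lemma, with the only structural difference being that at the exponent $2$ one can decouple the two factors by a change of variables and Fubini, rather than having to appeal to a H\"older-type bound with $L^{p_1,p_2}$ and $L^{p_1',p_2'}$. The one point deserving a line of care is the justification of Fubini/Tonelli and of the linear changes of variables: since all integrands are nonnegative, Tonelli applies unconditionally, and the substitutions $u=J\zeta$, $u\mapsto 2u$, $(z,u)\mapsto(z+u,z-u)$ are invertible linear maps with constant Jacobian, so only harmless absolute constants (powers of $2$ and of $d$) are produced, which are swallowed by the symbol $\lesssim$.
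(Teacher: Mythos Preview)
Your proof is correct and follows essentially the same route as the paper's: start from the identity $|V_\Phi W(g,f)(z,\zeta)|=|V_\f f(z+\tfrac{J\zeta}{2})|\,|V_\f g(z-\tfrac{J\zeta}{2})|$, insert the $v$-moderateness bound $v(u)^{-1}\le C\,m(\cdot)/m(\cdot)$, change variables, and reduce to the product $\|f\|_{M^2_m}\|g\|_{M^2_{1/m}}$. The only cosmetic difference is in the final step: the paper substitutes $w=z+J\zeta/2$, $u=J\zeta$, recognizes the resulting $du\,dw$ integral as $\int (|V_\f f|^2 m^2)\ast(|V_\f g|^2 m^{-2})$ and quotes Young's inequality $L^1\ast L^1\subset L^1$, whereas you perform the further linear change $(z,u)\mapsto(z+u,z-u)$ and factorize directly via Tonelli --- two packagings of the same computation.
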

\begin{proof}
The technique is similar to the one in  Lemma \ref{Wigwienerlemma}.  Using \eqref{star} and the change of variables $w=z +{J\z}/ {2}$, $u=J\z$, we can write
\begin{align*}
   \|W(g,f)\|_{W(\cF L^2_{1/v_J}, L^2)} &\asymp  \left( \intrdd  \intrdd  | V_\f f(z +\frac {J\z} {2})|^2 \,  |V_\f g(z -\frac{J\z}{2})|^2 \frac 1{v^2(J\z)}  d\z  \, dz\right)^\frac 12\\
    &=\left(\intrdd\intrdd | V_\f f(w)|^2 \,  |V_\f g(w-u)|^2 \frac{1}{v^2(u)}   \, du dw\right)^\frac12\\
    &\leq \tilde{C} \left(\intrdd (|V_{\f} f|^2 m^2)\ast (|V_{\f} g|^2 \frac1{m^2})\, du\right)^\frac12\\
    & \lesssim \| |V_{\f} f|^2 m^2\|_1 \||V_{\f} g|^2 \frac1{m^2}\|_1\\
    &\lesssim \|f\|_{M^{2}_m}\|g\|_{M^{2}_{1/m}},
  \end{align*}
  where we have used Young's Inequality $L^1\ast L^1\subset L^1$. This concludes the proof.
\end{proof}

\subsection{Main result}
We address this section to the study of \psdo s  acting on \modsp s and having symbols in weighted Wiener amalgam spaces.

Here is our main result.
\begin{theorem}\label{contwiener} Assume that $1 \leq p,q,r_1,r_2 \leq \infty$ satisfy
\begin{equation}\label{e1}
q\leq p'
\end{equation}
and
\begin{equation}\label{e2}
\max \{r_1,r_2,r_1',r_2'\}\leq p.\end{equation}
Consider $m \in \mathcal{M}_{v}(\rdd)$. Then every Weyl operator $\operatorname*{Op}\nolimits_{\mathrm{W}}(a)$ having symbol $a\in W(\cF L^p_{v_J}, L^q)$, from $\cS(\rd)$ to $\cS'(\rd)$, extends uniquely
to a bounded operator on $\mathcal{M}^{r_1,r_2}_m(\R^d)$, with the estimate
\begin{equation}\label{stimae1}
\|\operatorname*{Op}\nolimits_{\mathrm{W}}(a)f\|_{\mathcal{M}_m^{r_1,r_2}} \lesssim
\|a\|_{W(\cF L^p_{v_J},L^q)}\|f\|_{\mathcal{M}_m^{r_1,r_2}}.
\end{equation}
\end{theorem}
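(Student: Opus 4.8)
The plan is to move from the operator to its bilinear form via the weak definition \eqref{weakdef}, dualize twice, and then reduce the whole statement to a single bilinear estimate for the Wigner distribution that interpolates between Lemma~\ref{Wigwienerlemma} and Lemma~\ref{Wigwienerlemma2}. First note that \eqref{e2} forces $p\ge 2$, since $\max\{r_i,r_i'\}\ge 2$, and moreover $p'\le r_1,r_2\le p$. The degenerate case $p=2$ then forces $r_1=r_2=2$ and the theorem reduces at once to Lemma~\ref{Wigwienerlemma2}, so I will assume $p>2$. It also suffices to prove \eqref{stimae1} for $f\in\cS(\rd)$; the unique bounded extension to $M^{r_1,r_2}_m(\rd)$ follows by density when $r_1,r_2<\infty$ and by a weak-$*$ argument otherwise, using that $\Opw(\bar a)$ is a Weyl operator of the same type.

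For $f,g\in\cS(\rd)$, \eqref{weakdef} gives $\la \Opw(a)f,g\ra=\la a,W(g,f)\ra$, and the duality between Wiener amalgam spaces — which, by \eqref{W-M}, is the Fourier image of the duality $M^{p,q}_m\times M^{p',q'}_{1/m}\to\bC$ recalled in Section~2 — yields
\[
|\la a,W(g,f)\ra|\lesssim \|a\|_{W(\cF L^p_{v_J},L^q)}\,\|W(g,f)\|_{W(\cF L^{p'}_{1/v_J},L^{q'})}.
\]
Taking the supremum over $g$ in the unit ball of $M^{r_1',r_2'}_{1/m}(\rd)$ (a norming subspace of the dual of $M^{r_1,r_2}_m(\rd)$) reduces the theorem to the bilinear estimate
\begin{equation}\label{pf:key}
\|W(g,f)\|_{W(\cF L^{p'}_{1/v_J},L^{q'})}\lesssim \|f\|_{M^{r_1,r_2}_m}\,\|g\|_{M^{r_1',r_2'}_{1/m}},\qquad f,g\in\cS(\rd).
\end{equation}
Because $q\le p'$ means $p\le q'$, the inclusion $W(\cF L^{p'}_{1/v_J},L^{p})\hookrightarrow W(\cF L^{p'}_{1/v_J},L^{q'})$ (larger global exponent) shows it is enough to prove \eqref{pf:key} with target space $W(\cF L^{p'}_{1/v_J},L^{p})$.

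For this I would use bilinear complex interpolation between the two endpoint estimates $W\colon M^2_m\times M^2_{1/m}\to W(\cF L^2_{1/v_J},L^2)$ from Lemma~\ref{Wigwienerlemma2} and $W\colon M^{a_1,a_2}_m\times M^{a_1',a_2'}_{1/m}\to W(\cF L^1_{1/v_J},L^\infty)$ from Lemma~\ref{Wigwienerlemma} with $p_1=a_1,\ p_2=a_2$, taking $\theta:=1-2/p\in(0,1)$ and defining $a_i\in[1,\infty]$ by $1/a_i:=(1/r_i-1/p)/\theta$. The bounds $p'\le r_i\le p$ are exactly what guarantees $0\le 1/a_i\le 1$, and then $1/r_i=(1-\theta)/2+\theta/a_i$, hence also $1/r_i'=(1-\theta)/2+\theta/a_i'$. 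Since the weights $m$, $1/m$ and $1/v_J$ are the same at both endpoints, complex interpolation of the weighted modulation and Wiener amalgam spaces leaves these weights untouched and acts only on the Lebesgue exponents by the usual formulas; as $1/p'=(1-\theta)/2+\theta$ and $1/p=(1-\theta)/2$, the interpolated bilinear bound is precisely \eqref{pf:key} with target $W(\cF L^{p'}_{1/v_J},L^{p})$, and we are done.

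The main obstacle is the rigorous justification of the bilinear complex interpolation for these \emph{weighted} spaces: one needs the retract property of $M^{p,q}_m$ and of $W(\cF L^p_u,L^q_w)$ with respect to the short-time Fourier transform, together with the standard interpolation of weighted mixed-norm Lebesgue spaces, checking carefully that the common weights $m,1/m,1/v_J$ are reproduced unchanged. This is technical but routine. A secondary, minor point is the passage from the a priori estimate on $\cS(\rd)$ to a genuine bounded operator on $M^{r_1,r_2}_m(\rd)$ when $r_1$ or $r_2$ equals $\infty$, which requires the usual weak-$*$ density remarks rather than straightforward density of $\cS(\rd)$.
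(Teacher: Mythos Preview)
Your proposal is correct and follows essentially the same route as the paper: both arguments rest on complex interpolation between the $(\infty,1)$ and $(2,2)$ endpoints furnished by Lemmas~\ref{Wigwienerlemma} and~\ref{Wigwienerlemma2} (the paper repackages these as Propositions~\ref{Prop1} and~\ref{Prop2} after one duality step), together with the inclusion $W(\cF L^p_{v_J},L^q)\hookrightarrow W(\cF L^p_{v_J},L^{p'})$ for $q\le p'$. The only difference is organizational---you dualize first and interpolate the bilinear Wigner map $(f,g)\mapsto W(g,f)$, whereas the paper interpolates the bilinear map $(a,f)\mapsto\Opw(a)f$---and your treatment of the choice of $a_i$ and of the endpoint $r_i=\infty$ is in fact more explicit than the paper's.
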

The proof uses complex interpolation between Wiener amalgam spaces $W(\cF L^\infty_{v_J},L^1)$ and  $W(\cF L^2_{v_J},L^2)$, for which we first show the corresponding boundedness results.

\begin{proposition}\label{Prop1} Consider $m \in \mathcal{M}_{v}(\rdd)$ and $a \in W(\cF L^\infty_{v_J},L^1)$. Then the operator $\operatorname*{Op}\nolimits_{\mathrm{W}}(a)$ is bounded on $M^{r_1,r_2}_m$, for every $1\leq r_1,r_2\leq\infty$, with
\begin{equation}\label{stimae0}
\|\operatorname*{Op}\nolimits_{\mathrm{W}}(a)f\|_{{M}_m^{r_1,r_2}} \lesssim
\|a\|_{W(\cF L^\infty_{v_J},L^1)}\|f\|_{{M}_m^{r_1,r_2}}.
\end{equation}
\end{proposition}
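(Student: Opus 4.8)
The plan is to prove the boundedness of $\Opw(a)$ on $M^{r_1,r_2}_m$ for $a\in W(\cF L^\infty_{v_J},L^1)$ by pairing against a test function and applying the weak definition \eqref{weakdef} together with the duality between modulation spaces. Precisely, for $f,g\in\cS(\rd)$ we write
$$|\langle \Opw(a)f,g\rangle| = |\langle a, W(g,f)\rangle|,$$
and we interpret the right-hand side as a duality pairing between the Wiener amalgam space $W(\cF L^\infty_{v_J},L^1)$ and its dual. The key point is to identify a Banach space $X$ of distributions on $\rdd$ such that (i) $W(g,f)\in X$ with a norm bound of the form $\|W(g,f)\|_X\lesssim \|f\|_{M^{r_1,r_2}_m}\|g\|_{M^{r_1',r_2'}_{1/m}}$, and (ii) the pairing $|\langle a,F\rangle|\lesssim \|a\|_{W(\cF L^\infty_{v_J},L^1)}\|F\|_X$ holds. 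Once this is done, taking the supremum over $\|g\|_{M^{r_1',r_2'}_{1/m}}\le 1$ and using that $M^{r_1,r_2}_m$ is (isometrically, up to window equivalence) the dual-of-the-predual in the relevant sense — more carefully, using that the pairing $M^{r_1,r_2}_m\times M^{r_1',r_2'}_{1/m}\to\bC$ recovers the norm when at least one index is finite, and a density/limiting argument otherwise — yields \eqref{stimae0} on the Schwartz class, after which the unique bounded extension is automatic by density of $\cS$ in $M^{r_1,r_2}_m$ (or by $w^*$-density when $r_i=\infty$).

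The natural candidate for $X$ is $W(\cF L^1_{1/v_J},L^\infty)$, which is exactly the space appearing in Lemma \ref{Wigwienerlemma}: applying that lemma with $p_1=r_1$, $p_2=r_2$ gives precisely the bound in (i). It then remains to check (ii), namely that the duality pairing of $W(\cF L^\infty_{v_J},L^1)$ with $W(\cF L^1_{1/v_J},L^\infty)$ is well-defined and bounded. This should follow from the identity \eqref{W-M}: by Parseval/the fundamental identity of \tfa, $W(\cF L^p_u,L^q_w) = \cF(M^{p,q}_{u\otimes w})$, so the pairing $\langle a, F\rangle$ transfers, via $\cF$, to a pairing between $M^{\infty,1}_{v_J\otimes 1}$ and $M^{1,\infty}_{(1/v_J)\otimes 1}$; and the general duality statement for modulation spaces recalled in the preliminaries — that $\langle\cdot,\cdot\rangle$ extends continuously to $M^{p,q}_\mu\times M^{p',q'}_{1/\mu}\to\bC$ — applies with $(p,q)=(\infty,1)$ and weight $\mu=v_J\otimes 1$. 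I would spell out the weight bookkeeping carefully here, since $v_J$ is a weight on $\rdd$ viewed through the first ($\cF$-)variable, and one must confirm the tensor structure $v_J\otimes 1$ matches the definition of $W(\cF L^\infty_{v_J},L^1)$ given in Section 2.

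The main obstacle is not any single hard estimate — Lemma \ref{Wigwienerlemma} does the analytic work — but rather the careful handling of the duality and the extension procedure when some of the exponents $r_1,r_2$ equal $\infty$. In that case $\cS$ is not norm-dense in $M^{\infty,\cdot}$ or $M^{\cdot,\infty}$, so the statement ``$\Opw(a)$ extends uniquely to a bounded operator'' must be understood via the predual: one proves the estimate first for the finite exponents, or one uses that $\Opw(a)$ is already defined from $\cS$ to $\cS'$ and that the a priori bound $|\langle \Opw(a)f,g\rangle|\lesssim \|a\|\,\|f\|_{M^{r_1,r_2}_m}\|g\|_{M^{r_1',r_2'}_{1/m}}$ for all $f,g\in\cS$ forces $\Opw(a)f$ to lie in $M^{r_1,r_2}_m$ with the claimed norm control, exploiting that $\cS$ \emph{is} dense in $M^{r_1',r_2'}_{1/m}$ when $r_1',r_2'<\infty$, i.e. when $r_1,r_2>1$; the boundary cases $r_i=1$ are then obtained by a separate duality argument or a $w^*$-limiting argument. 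A clean way to package all of this is to invoke the symmetry of the hypotheses under $r_i\leftrightarrow r_i'$ (the assumption of Proposition \ref{Prop1} places no restriction tying $r_1,r_2$ to $p,q$) together with the self-duality of the setup: boundedness on $M^{r_1,r_2}_m$ is equivalent, by \eqref{weakdef}, to boundedness of $\Opw(\bar a)$ on the dual, so proving the estimate for all finite $(r_1,r_2)$ automatically covers all cases by transposition and interpolation/density. I would therefore present the core argument for $1\le r_1,r_2<\infty$ in full and then dispatch the endpoints with a short remark.
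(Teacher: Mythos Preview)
Your proposal is correct and follows essentially the same approach as the paper: weak definition \eqref{weakdef}, duality pairing between $W(\cF L^\infty_{v_J},L^1)$ and $W(\cF L^1_{1/v_J},L^\infty)$, and Lemma \ref{Wigwienerlemma} with $(p_1,p_2)=(r_1,r_2)$. The only cosmetic difference is that the paper realizes the duality directly via H\"older's inequality on the STFT, i.e.\ $|\langle a,W(g,f)\rangle|\le \|V_\Phi a\|_{L^1_z(L^\infty_{v_J,\zeta})}\|V_\Phi W(g,f)\|_{L^\infty_z(L^1_{1/v_J,\zeta})}$, rather than transferring through $\cF$ to modulation-space duality as you suggest; and the paper does not discuss the endpoint extension/density issues at all.
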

\begin{proof} For every $f\in {M}_m^{r_1,r_2}$ and $g\in {M}_{1/m}^{r'_1,r'_2}$, we can write, for any fixed $\Phi\in\cS(\rdd)\setminus \{0\}$,
\begin{equation*}
| \la \operatorname*{Op}\nolimits_{\mathrm{W}}(a) f, g\ra |=|\la a, W(g,f)\ra|\leq \|V_\Phi a\|_{L^1_z(L^\infty_{{v_J},\z})}\|V_{\Phi} W(g,f)\|_{L^\infty_z(L^1_{{{1/v_J}},\z})}.
\end{equation*}
Observe that
$$ \|W(g,f)\|_{W(\cF L^1_{v_J},L^\infty)}\asymp \|V_\Phi W(g,f)\|_{L^\infty_z(L^1_{{1/v_J},\z})}\lesssim \|f\|_{{M}_m^{r_1,r_2}} \|g\|_{{M}_{1/m}^{r'_1,r'_2}},
$$
by Lemma \ref{Wigwienerlemma}. This concludes the proof.
\end{proof}

\begin{proposition}\label{Prop2} Consider $m \in \mathcal{M}_{v}(\rdd)$ and $a \in W(\cF L^2_{v_J},L^2)$. Then the operator $\operatorname*{Op}\nolimits_{\mathrm{W}}(a)$ is bounded on $M^{2}_m$ with
\begin{equation}\label{stimae02}
\|\operatorname*{Op}\nolimits_{\mathrm{W}}(a)f\|_{{M}_m^{2}} \lesssim
\|a\|_{W(\cF L^2_{v_J},L^2)}\|f\|_{{M}_m^{2}}.
\end{equation}
\end{proposition}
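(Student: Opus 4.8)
The plan is to imitate the duality argument in the proof of Proposition \ref{Prop1}, replacing the Hölder pair $(1,\infty)$ by the self-dual pair $(2,2)$ and invoking Lemma \ref{Wigwienerlemma2} in place of Lemma \ref{Wigwienerlemma}. First I would fix a window $\Phi\in\cS(\rdd)\setminus\{0\}$ and observe that, $v$ being an even submultiplicative weight and $J$ linear, $v_J(\z)=v(J\z)$ is again an even submultiplicative weight on $\rdd$; hence $W(\cF L^2_{v_J},L^2)$ and $W(\cF L^2_{1/v_J},L^2)$ are well defined and, by \eqref{W-M}, are the Fourier images of $M^2_{v_J\otimes 1}$ and $M^2_{(1/v_J)\otimes 1}$, which are placed in duality by the extension of the $L^2$ inner product recalled in Section 2. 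In particular, for $a\in W(\cF L^2_{v_J},L^2)$, $f\in\cS(\rd)\subset M^2_m$ and $g\in M^2_{1/m}$, the bracket $\la a,W(g,f)\ra$ is meaningful, since $W(g,f)\in W(\cF L^2_{1/v_J},L^2)$ by Lemma \ref{Wigwienerlemma2}.

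The core estimate then runs exactly as in Proposition \ref{Prop1}. Starting from \eqref{weakdef} and passing to the time-frequency side via the orthogonality relations for the STFT, I would write
$$|\la \Opw(a)f,g\ra|=|\la a,W(g,f)\ra|\leq \|V_\Phi a\|_{L^2_z(L^2_{v_J,\z})}\,\|V_\Phi W(g,f)\|_{L^2_z(L^2_{1/v_J,\z})},$$
which is just the weighted mixed-norm Hölder inequality on $\rdd\times\rdd$ with the weight split as $1=v_J(\z)\cdot v_J(\z)^{-1}$ (both exponents being $2$, the mixed norm is a plain weighted $L^2$-norm and Hölder is the Cauchy--Schwarz inequality). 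The first factor is $\asymp\|a\|_{W(\cF L^2_{v_J},L^2)}$ by the STFT description of Wiener amalgam norms, while for the second factor Lemma \ref{Wigwienerlemma2} gives $\|V_\Phi W(g,f)\|_{L^2_z(L^2_{1/v_J,\z})}\asymp\|W(g,f)\|_{W(\cF L^2_{1/v_J},L^2)}\lesssim \|f\|_{M^2_m}\|g\|_{M^2_{1/m}}$. Combining,
$$|\la \Opw(a)f,g\ra|\lesssim \|a\|_{W(\cF L^2_{v_J},L^2)}\,\|f\|_{M^2_m}\,\|g\|_{M^2_{1/m}}\qquad\text{for all }g\in M^2_{1/m}.$$

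To conclude I would use that $\cS(\rd)$ is dense in $M^2_{1/m}$ and that $(M^2_{1/m})'=M^2_m$ under the extended sesquilinear pairing recalled in Section 2: the displayed bound shows that $\Opw(a)f$, a priori only in $\cS'(\rd)$, defines a bounded conjugate-linear functional on $M^2_{1/m}$, hence is represented by a (unique) element of $M^2_m$, which must coincide with $\Opw(a)f$ and satisfies $\|\Opw(a)f\|_{M^2_m}\lesssim \|a\|_{W(\cF L^2_{v_J},L^2)}\|f\|_{M^2_m}$ for every $f\in\cS(\rd)$; density of $\cS(\rd)$ in $M^2_m$ then yields the unique bounded extension to all of $M^2_m$. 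I do not expect a genuine obstacle here: the only point requiring care is the weight bookkeeping in the duality --- keeping $v_J$ attached to the frequency variable of the STFT on $\rdd\times\rdd$ and checking that the relation $v_J(\z)\cdot v_J(\z)^{-1}=1$ is precisely what the $(2,2)$-Cauchy--Schwarz needs --- so that the proposition is really the $L^2$-endpoint companion of Proposition \ref{Prop1}, tailored to be complex-interpolated against it in the proof of Theorem \ref{contwiener}.
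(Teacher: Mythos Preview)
Your proposal is correct and follows exactly the approach the paper itself indicates: the paper's proof of Proposition~\ref{Prop2} simply states that the arguments are the same as in Proposition~\ref{Prop1}, with Lemma~\ref{Wigwienerlemma} replaced by Lemma~\ref{Wigwienerlemma2}, and leaves the details to the reader. You have supplied precisely those details---the $(2,2)$ Cauchy--Schwarz step, the identification of the Wiener amalgam norms with the corresponding weighted mixed $L^2$-norms of the STFT, and the density/duality argument to pass from $\cS$ to $M^2_m$---so nothing is missing and nothing differs from the paper's intended line.
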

\begin{proof}
The arguments are the same as Proposition \ref{Prop1}, with Lemma \ref{Wigwienerlemma} replaced by \ref{Wigwienerlemma2}. We leave the details to the interested reader.
\end{proof}
\begin{remark} (i) Observe that by \eqref{W-M}, $W (\cF L^2_{v_J},L^2)=\cF M^2_{v_J\otimes 1}$ and a straightforward modification of \cite[Theorem 11.3.5 (c)]{book}
gives
$$ \cF M^2_{v_J\otimes 1}= M^2_{1\otimes v_{J^{-1}}}= M^2_{1\otimes v_{J}}
$$
since by assumption $v(-z)=v(z)$.\par
(ii)  Since $v(-z)=v(z)$, the weight $v_J$ is even and the conclusion of the previous step (i)  also follows by \cite[Theorem 6]{feichtinger90}, in the case $p=2$.\par
(iii) Using (i) or (ii) we derive that the Wiener amalgam space $W(\cF L^2_{v_J},L^2)$ coincides with the modulation space  $M^2_{1\otimes v_J}$. Then the conclusion of Proposition \ref{Prop2} also follows from \cite[Theorem 4.3]{Toftweight}.
\end{remark}

\begin{proof}[Proof of Theorem \ref{contwiener}.] We make use of complex interpolation between Wiener amalgam and modulation spaces, using the boundedness results of Propositions \ref{Prop1} and \ref{Prop2}.
For $\theta\in [0,1]$, we have
$$ [W(\cF^{\infty}_{v_J}, L^1), W(\cF^{2}_{v_J}, L^2)]_{\theta}=W(\cF L^{p}_{v_J}, L^{p'}),
$$
with $2\leq p\leq\infty$. As far as modulation spaces concern,
$ [M^{s_1,s_2}_m M^2_m]_\theta=M^{r_1,r_2}_m$,
with
$$\frac1{r_1}=\frac {1-\theta}{s_1}+\frac\theta 2=\frac {1-\theta}{s_1}+\frac1p
$$
 and
$$\frac1{r_2}=\frac {1-\theta}{s_2}+\frac\theta 2=\frac {1-\theta}{s_2}+\frac1p
$$
hence $r_1,r_2\leq p$. Similarly we obtain $r_1',r_2'\leq p$, and the \eqref{e2} follows.
Finally, inclusion relations for Wiener amalgam spaces allow to consider symbols $a\in W(\cF L^{p}_{v_J}, L^{q})$, with $q\leq p'$, which gives \eqref{e1} and concludes the proof.
\end{proof}

\section*{Acknowledgements}
The authors would like to thank Professors Elena Cordero and Fabio Nicola for fruitful conversations and comments.

%

\vskip0.5truecm


\begin{thebibliography}{10}
\bibitem{beltita} I. Belti\cb{t}\u{a} and D. Belti\cb{t}\u{a} , Modulation Spaces of Symbols for Representations of Nilpotent Lie Groups,
{\it J. Fourier Anal. Appl.}, (2011) 17:290. 
\bibitem{benyi} A. B\'enyi, K. Gr\"ochenig,
 K.A. Okoudjou and L.G. Rogers.
 Unimodular Fourier multipliers for
 modulation spaces. {\it J. Funct. Anal.},
246(2): 366-384, 2007.
%
\bibitem {bogetal}P. Boggiatto, G. De Donno, A. Oliaro, Time-frequency
representations of Wigner type and pseudo-differential operators, Trans. Amer.
Math. Soc., 362(9) (2010), 4955--4981.
%

\bibitem{cgn01} E. Cordero, M. de Gosson, F. Nicola. On the Invertibility of Born-Jordan Quantization. {\em J. Math. Pures Appl.},  105(4):537--557, 2016. 
\bibitem{cgn3}E. Cordero, M. de Gosson and F. Nicola. Time-frequency Analysis of Born-Jordan Pseudodifferential Operators. {\em J. Funct. Anal.}, 272(2):577--598, 2017. DOI:10.1016/j.jfa.2016.10.004
\bibitem{CG02} E.~Cordero and K.~Gr\"ochenig.
\newblock Time-frequency analysis of localization operators.
\newblock {\em J. Funct. Anal.}, 205 (1), 107--131, 2003.
\bibitem{fiowiener} E.~Cordero, K.~Gr\"ochenig, F. Nicola and  L. Rodino. Generalized Metaplectic Operators and the Schr\"odinger   Equation  with  a  Potential in the  Sj\"ostrand Class, {\em J. Math. Phys.}, 55(8), art. no. 081506, 2014.
\bibitem{CNJFA2008}
E.~Cordero and F. Nicola.
\newblock {Metaplectic representation
	on  Wiener amalgam spaces and applications
	to the Schr\"odinger equation}.
\newblock {\em J. Funct. Anal.}, 254:506--534, 2008.
\bibitem{cordero}
E.~Cordero and F. Nicola.
\newblock {S}trichartz
estimates in Wiener amalgam spaces for
the Schr\"odinger equation.
\newblock {\em Math. Nachr.}, 281(1):25--41, 2008.
\bibitem{cn}
E.~Cordero and F. Nicola.
\newblock {Metaplectic representation
	on Wiener amalgam spaces and applications to the
	Schr\"odinger
	equation}.
\newblock {\em J. Funct. Anal.}, 254:
506-534, 2008.
\bibitem{fio5}
E.~Cordero and F. Nicola.
\newblock Boundedness of Schr\"odinger type propagators on modulation spaces. {\it J. Fourier Anal. Appl.}, 16(3):311--339, 2010.


\bibitem{cordero1} E. Cordero, F. Nicola, Sharp continuity results for the short-time Fourier transform and for localization operators, Monatsh. Math., 162 (2011), 251--276.

\bibitem{cordero2} E. Cordero, F. Nicola, Pseudodifferential operators on $L^p$, Wiener amalgam and modulation spaces, {\em  Int. Math. Res. Notices}, 10 (2010), 1860--1893.
\bibitem{corderonicolasharp} E. Cordero and F. Nicola. Sharp integral bounds for Wigner distributions. {\em International Mathematics Research Notices}, 2016(00), 1--29, 2016. DOI:10.1093/imrn/rnw250
\bibitem{fio9} E.~Cordero and  F. Nicola.  On the Schr\"odinger equation with potential in modulation spaces.
{\em J. Pseudo-Differ. Op. and Appl.}, 5(3):319--341, 2014.
\bibitem{fio1}
E.~Cordero, F. Nicola and L. Rodino. Time-frequency
analysis of Fourier integral operators. {\it Commun. Pure
	Appl. Anal}., 9(1):1--21, 2010.

\bibitem{fio3}
E.~Cordero, F. Nicola and L. Rodino,
\newblock Sparsity of  Gabor representation of Schr\"odinger propagators.
\newblock {\em Appl. Comput. Harmon. Anal.}, 26(3):357--370, 2009.
\bibitem{fio-evolution}
E.~Cordero, F. Nicola and L. Rodino,
Gabor representations of evolution operators. {\em Trans. Amer. Math. Soc.}, 367(11):7639--7663, 2015.
\bibitem{fio6} E. Cordero, F. Nicola and L. Rodino. Schr\"odinger equations with rough Hamiltonians. {\em Journal Discrete and Continuous Dynamical System - A}, 35(10):4805--4821, 2015.
\bibitem{fio7} E.~Cordero,  F. Nicola and  L. Rodino. Propagation of the Gabor Wave Front Set for Schr\"odinger Equations with non-smooth potentials. {\em  Rev. Math. Phys.},   27 (1) art. no. 1550001, 2015. 
\bibitem{fio8} E.~Cordero,  F. Nicola and  L. Rodino. Integral Representations for the Class of Generalized Metaplectic Operators.  {\em J. Fourier Anal. Appl.}, 21:694--714, 2015. 
\bibitem{fio09} E. Cordero, F. Nicola and L. Rodino. Wave packet analysis of Schr\"odinger equations in analytic
function spaces. {\em Adv. Math.}, 278:182--209, 2015. DOI:10.1016/j.aim.2015.03.014
\bibitem{fio10} E.~Cordero,  F. Nicola and  L. Rodino. Exponentially sparse representations of Fourier integral operators. {\em Rev. Math. Iberoamer.}, 31:461--476, 2015.
\bibitem{cordero3} E. Cordero, A. Tabacco, P. Wahlberg, Schr\"odinger-type propagators, pseudodifferential operators and modulation spaces, J. London Math. Soc., 88(2): 375--395, 2013.
\bibitem{corderoToftWah} E. Cordero,  J. Toft and P. Wahlberg. Sharp results for the Weyl product on modulation spaces. {\em J. Funct. Anal.}, 267(8):3016--3057, 2014.
\bibitem {Birkbis}M. de Gosson, Symplectic methods in Harmonic Analysis and in
Mathematical Physics, Birkh\"{a}user, 2011.
\bibitem{feichtinger80} H.~G.
Feichtinger.
\newblock Banach convolution algebras of {W}iener's type,
\newblock In {\em Proc. Conf. ``Function, Series, Operators",
 Budapest August 1980}, Colloq. Math. Soc. J\'anos Bolyai, 35,  509--524, North-Holland, Amsterdam,
 1983.
\bibitem{feichtinger83}
H.~G. Feichtinger.
\newblock Banach spaces of distributions of {W}iener's type and interpolation.
\newblock In {\em Proc. Conf. Functional Analysis and Approximation,
 Oberwolfach August 1980},  Internat. Ser. Numer. Math., 69:153--165. Birkh\"auser, Boston, 1981.
\bibitem{feichtinger90}
H.~G. Feichtinger.
\newblock Generalized amalgams, with applications to {F}ourier transform.
\newblock {\em Canad. J. Math.}, 42(3):395--409, 1990.

\bibitem{fe89-1}
H.~G. Feichtinger.
\newblock {A}tomic characterizations of modulation spaces through {G}abor-type
representations.
\newblock In  {\em {P}roc. {C}onf. {C}onstructive {F}unction {T}heory},  {\em {R}ocky {M}ountain {J}. {M}ath.}, 19:113--126, 1989.
\bibitem{charly06}
K.~Gr{\"o}chenig. Time-Frequency Analysis of Sj{\"o}strand's
Class. {\it Rev. Mat. Iberoamericana}, 22(2):703--724, 2006.
\bibitem{book}
K.~Gr{\"o}chenig.
\newblock {\em Foundations of Time-Frequency Analysis}.
\newblock Birkh\"auser, Boston, 2001.
\bibitem{g-heil99}
K.~Gr{\"o}chenig and C.~Heil.
\newblock Modulation spaces and pseudodifferential operators.
\newblock {\em Integral Equations Operator Theory}, 34(4):439--457, 1999.
\bibitem{Hormander3}
L.~H{\"o}rmander.
\newblock {\em The analysis of linear partial differential operators. {III}},
  volume 274 of {\em Grundlehren der Mathematischen Wissenschaften [Fundamental
  Principles of Mathematical Sciences]}.
\newblock Springer-Verlag, Berlin, 1994.
\bibitem{labate2} D. Labate, Pseudodifferential operators on modulation spaces. {\em J. Math.
Anal. Appl.,} 262:242--255, 2001.
\bibitem{Lieb}
L. H. Lieb.
\newblock {Integral bounds for radar ambiguity functions and Wigner distributions}.
\newblock {\it  J. Math. Phys.}, 31, No.3, 1990.
\bibitem{Sjostrand1}
J. Sj\"ostrand. An algebra of
pseudodifferential operators. {\em
Math. Res. Lett.}, 1:185--192, 1994.
\bibitem{sugitomita2}
M. Sugimoto and N. Tomita.
\newblock {{B}oundedness properties of pseudo-differential operators and {C}alder\`on-{Z}ygmund operators on
  modulation spaces}.
\newblock {\em J. Fourier Anal. Appl.}, 14(1):124–143, 2008.
\bibitem{toft1}
J.~Toft.
\newblock Continuity properties for modulation spaces, with applications to
  pseudo-differential calculus. {I}.
\newblock {\em J. Funct. Anal.}, 207(2):399--429, 2004.
\bibitem{Toftweight}
J.~Toft.
\newblock Continuity properties for modulation spaces, with applications
              to pseudo-differential calculus. {II}.
\newblock {\em Ann. Global Anal. Geom.}, 26(1):73--106, 2004.
\bibitem{Weyl1927} H.~Weyl, \newblock Quantenmechanik und Gruppentheories. %
\newblock {\em Zeitschrift fur Physik}, 46:1-46, 1927.
\bibitem{Wongbook} M. W. Wong. Weyl Transforms, Springer, 1998.


\end{thebibliography}
\end{document}